%% This document created by Scientific Word (R) Version 3.0
%\newtheorem{theorem}{Theorem}

\documentclass[10pt]{amsart}%
\usepackage{graphicx}
\usepackage{amscd, color}
\usepackage{amsmath}
\usepackage{amsfonts}
\usepackage{amssymb}%
\setcounter{MaxMatrixCols}{30}
%TCIDATA{OutputFilter=latex2.dll}
%TCIDATA{Version=5.50.0.2890}
%TCIDATA{CSTFile=amsartci.cst}
%TCIDATA{Created=Thu Apr 25 16:21:19 2002}
%TCIDATA{LastRevised=Thursday, June 03, 2010 19:40:56}
%TCIDATA{<META NAME="GraphicsSave" CONTENT="32">}
%TCIDATA{<META NAME="SaveForMode" CONTENT="1">}
%TCIDATA{BibliographyScheme=Manual}
%TCIDATA{<META NAME="DocumentShell" CONTENT="Journal Articles\AMS Journal Article">}
%TCIDATA{Language=American English}
%BeginMSIPreambleData
\providecommand{\U}[1]{\protect\rule{.1in}{.1in}}
%EndMSIPreambleData
\providecommand{\U}[1]{\protect\rule{.1in}{.1in}}
\providecommand{\U}[1]{\protect\rule{.1in}{.1in}} \textwidth 16.3cm
\textheight 21.5cm \headheight 0.3cm \headsep 0.8cm \topmargin
0.15cm \oddsidemargin 0.2cm \evensidemargin 0.2cm
\theoremstyle{plain}

\newtheorem{theorem}{Theorem}[section]

\numberwithin{equation}{section}

\begin{document}
\title{A remark on the paper \textquotedblleft A Unified Pietsch Domination
Theorem\textquotedblright}
\author{Daniel Pellegrino and Joedson Santos}
\address[D. Pellegrino]{ Departamento de Matem\'{a}tica, Universidade Federal da
Para\'{\i}ba, 58.051-900 - Jo\~{a}o Pessoa, Brazil\\
[J. Santos] Departamento de Matem\'{a}tica, Universidade Federal de Sergipe,
49500-000- Itabaiana, Brazil}
\email{dmpellegrino@gmail.com}
\thanks{D. Pellegrino by INCT-Matem\'{a}tica, PROCAD-NF Capes, CNPq Grant
620108/2008-8 (Ed. Casadinho) and CNPq Grant 301237/2009-3. }

\begin{abstract}
In this short communication we show that the Unified Pietsch Domination proved
in \cite{BPRn} remains true even if we remove two of its apparently crucial hypothesis.

\end{abstract}
\maketitle

\section{Introduction}

Let $X$, $Y$ and $E$ be (arbitrary) non-void sets, $\mathcal{H}$ be a family
of mappings from $X$ to $Y$, $G$ be a Banach space and $K$ be a compact
Hausdorff topological space. Let
\[
R\colon K\times E\times G\longrightarrow\lbrack0,\infty)~\text{and}%
\mathrm{~}S\colon{\mathcal{H}}\times E\times G\longrightarrow\lbrack0,\infty)
\]
be arbitrary mappings.

A mapping $f\in\mathcal{H}$ is said to be $RS$-abstract $p$-summing if there
is a constant $C>0$ so that%
\begin{equation}
\left(  \sum_{j=1}^{m}S(f,x_{j},b_{j})^{p}\right)  ^{\frac{1}{p}}\leq
C\sup_{\varphi\in K}\left(  \sum_{j=1}^{m}R\left(  \varphi,x_{j},b_{j}\right)
^{p}\right)  ^{\frac{1}{p}}, \label{33M}%
\end{equation}
for all $x_{1},\ldots,x_{m}\in E,$ $b_{1},\ldots,b_{m}\in G$ and
$m\in\mathbb{N}$.

The main result of \cite{BPRn} proves that under certain assumptions on $R$
and $S$ there is a quite general Pietsch Domination-type Theorem. More
precisely $R$ and $S$ must satisfy the three properties below:

\noindent\textbf{(1)} For each $f\in\mathcal{H}$, there is a $x_{0}\in E$ such
that
\[
R(\varphi,x_{0},b)=S(f,x_{0},b)=0
\]
for every $\varphi\in K$ and $b\in G$.\newline\noindent\textbf{(2)} The
mapping
\[
R_{x,b}\colon K\longrightarrow\lbrack0,\infty)~\text{defined by}%
~R_{x,b}(\varphi)=R(\varphi,x,b)
\]
is continuous for every $x\in E$ and $b\in G$.\newline\noindent\textbf{(3)}
For every $\varphi\in K,x\in E,0\leq\eta\leq1,b\in G$ and $f\in{\mathcal{H}}$,
the following inequalities hold:
\[
R\left(  \varphi,x,\eta b\right)  \leq\eta R\left(  \varphi,x,b\right)
~\text{and}\mathrm{~}\eta S(f,x,b)\leq S(f,x,\eta b).
\]

The Pietsch Domination Theorem from \cite{BPRn} reads as follows:

\begin{theorem}
\label{yn} If $R$ and $S$ satisfy \textbf{(1)}, \textbf{(2)} and \textbf{(3)}
and $0<p<\infty,$ then $f\in{\mathcal{H}}$ is $RS$-abstract $p$-summing if and
only if there is a constant $C>0$ and a Borel probability measure $\mu$ on $K$
such that%
\begin{equation}
S(f,x,b)\leq C\left(  \int_{K}R\left(  \varphi,x,b\right)  ^{p}d\mu\right)
^{\frac{1}{p}}%
\end{equation}
for all $x\in E$ and $b\in G.$
\end{theorem}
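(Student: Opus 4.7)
The plan is to adapt the classical Hahn--Banach / Ky Fan separation argument that underlies every Pietsch-type domination theorem. The easy direction is a one-line computation: if the integral bound holds, then raising it to the $p$-th power, summing over $j$, pulling the supremum out of the integral, and taking $p$-th roots recovers the summing inequality with the same constant $C$.

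For the main direction, suppose $f$ is $RS$-abstract $p$-summing with constant $C$. For each finite family $(x_j,b_j)_{j=1}^m$ in $E\times G$ I would define $g\in C(K)$ by
$$g(\varphi)=\sum_{j=1}^{m}S(f,x_j,b_j)^{p}-C^{p}\sum_{j=1}^{m}R(\varphi,x_j,b_j)^{p},$$
with continuity coming from hypothesis \textbf{(2)}. Write $\mathcal{P}$ for the set of all such $g$. The summing inequality translates to $\inf_{\varphi\in K} g(\varphi)\le 0$ for every $g\in\mathcal{P}$, and hypothesis \textbf{(1)} (choose $x_j=x_0$) places the zero function in $\mathcal{P}$.

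Next I would check a convex-cone structure. Concatenating data shows $\mathcal{P}$ is closed under addition; hypothesis \textbf{(3)} handles positive scaling: for $\lambda\in(0,1)$, replacing each $b_j$ by $\lambda^{1/p}b_j$ produces, via the one-sided inequalities of \textbf{(3)}, an element $g'\in\mathcal{P}$ with $g'\ge\lambda g$ pointwise on $K$. Hence the set $\mathcal{Q}=\{h\in C(K):h\ge g\text{ for some }g\in\mathcal{P}\}$ is a convex cone, disjoint from the open convex cone $\mathcal{C}^{+}=\{h\in C(K):h>0\text{ on }K\}$ (because every $h\in\mathcal{Q}$ has $\inf_{\varphi}h\le 0$). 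A geometric Hahn--Banach separation, together with the Riesz representation theorem, yields a nonzero nonnegative Borel measure, which after normalization becomes a probability $\mu$ satisfying $\int_{K}g\,d\mu\le 0$ for every $g\in\mathcal{P}$. Applying this to a single pair $(x,b)\in E\times G$ and extracting the $p$-th root delivers the claimed pointwise bound.

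The main obstacle is verifying this cone structure rigorously. Hypothesis \textbf{(3)} is asymmetric in two ways: it is an inequality (not equality), and it acts in opposite directions on $R$ and $S$. One must therefore pick the scaling $\eta=\lambda^{1/p}$ so that both inequalities conspire to push $g'$ \emph{above} $\lambda g$, not below; choosing the scaling the wrong way would break the argument. All three hypotheses enter exactly once: \textbf{(2)} places $g$ in $C(K)$ so that Riesz is available, \textbf{(1)} supplies the zero element used both in the convexity step and to guarantee $\mathcal{Q}$ is not empty, and \textbf{(3)} supplies the asymmetric scaling needed for the convex-cone structure of $\mathcal{Q}$.
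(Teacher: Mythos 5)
Your overall strategy (separate a convex set from the open positive cone in $C(K)$, then apply Riesz representation) is the classical Pietsch route, and it is genuinely different from what this paper does: Theorem \ref{yn} is quoted here from \cite{BPRn}, and the argument the paper actually gives (for the stronger Theorem \ref{part}, which already drops hypothesis \textbf{(1)}) runs Ky Fan's minimax lemma on the weak-star compact convex set $P(K)$ of probability measures, producing the measure directly rather than via Hahn--Banach in $C(K)$. Either route can be made to work, but your execution has a concrete error at the key step.

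The set $\mathcal{Q}=\{h\in C(K):h\ge g\text{ for some }g\in\mathcal{P}\}$ is oriented the wrong way, and the parenthetical justification for disjointness from $\mathcal{C}^{+}$ is false: $h\ge g$ with $\inf_{\varphi}g\le 0$ does not force $\inf_{\varphi}h\le 0$. Indeed, hypothesis \textbf{(1)} places the zero function in $\mathcal{P}$, so the constant function $1$ belongs both to your $\mathcal{Q}$ and to $\mathcal{C}^{+}$; the two sets are not disjoint and there is nothing to separate. Your own scaling computation points to the fix: you correctly derive $g'\ge\lambda g$, which says that $\lambda g$ lies \emph{below} an element of $\mathcal{P}$, so the set you want is the downward closure $\{h\in C(K):h\le g\text{ for some }g\in\mathcal{P}\}$. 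Equivalently, note that every convex combination $\sum_i t_i g_i$ of elements of $\mathcal{P}$ satisfies $\sum_i t_i g_i\le\sum_i g_i'$ with $\sum_i g_i'\in\mathcal{P}$ by concatenation, hence has nonpositive infimum, so $\mathrm{conv}(\mathcal{P})$ itself is disjoint from $\mathcal{C}^{+}$; also observe that with your orientation $\mathcal{Q}$ is not closed under multiplication by $\lambda\in(0,1)$ either, since $g'\ge\lambda g$ does not exhibit an element of $\mathcal{P}$ lying below $\lambda g$. Once the orientation is reversed, the separation, the positivity and normalization of the resulting functional, and the specialization to $m=1$ all go through as you describe; the remaining ingredients of your proposal (the easy direction, continuity via \textbf{(2)}, additivity by concatenation, and the choice $\eta=\lambda^{1/p}$ in \textbf{(3)}) are correct.
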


The aim of this note is to show that, surprisingly, the hypothesis
\textbf{(1)} and \textbf{(3)} are not necessary. So, Theorem \ref{yn} is true
for arbitrary $S$ (no hypothesis is needed) and the map $R$ just needs to
satisfy \textbf{(2)}.

\section{A recent approach to PDT}

In a recent preprint \cite{joed} we have extended the Pietsch Domination
Theorem from \cite{BPRn} to a more abstract setting, which allows to deal with
more general nonlinear mappings in the cartesian product of Banach spaces. In
the present note we shall recall the argument used in \cite{joed} and a
combination of this argument with an interesting argument due to M. Mendel and
G. Schechtman (used in \cite{BPRn}) will help us to show that Theorem \ref{yn}
is valid without the hypothesis \textbf{(1)} and \textbf{(3)} on $R$ and $S$.

The first step is to prove Theorem \ref{yn} without the hypothesis
\textbf{(1)}. This result is proved in \cite{joed} in a more general setting.
Since the paper \cite{joed} is unpublished and we just need a very particular
case, we prefer to sketch the proof for this particular case. The proof of
this particular case is essentially Pietsch's original proof on a nonlinear disguise.

\begin{theorem}
\label{part}Suppose that $R$ and $S$ satisfy \textbf{(2)} and \textbf{(3)}. A
map $f\in{\mathcal{H}}$ is $RS$-abstract $p$-summing if and only if there is a
constant $C>0$ and a Borel probability measure $\mu$ on $K$ such that%
\begin{equation}
S(f,x,b)\leq C\left(  \int_{K}R\left(  \varphi,x,b\right)  ^{p}d\mu\right)
^{1/p} \label{gupdt}%
\end{equation}
for all $x\in E$ and $b\in G.$
\end{theorem}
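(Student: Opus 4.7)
The plan is to adapt Pietsch's original separation argument to this abstract setting. The easy (reverse) implication is routine: raise (\ref{gupdt}) to the $p$-th power, sum over $j$, swap the finite sum and the integral, and bound the integrand by its supremum over $K$; this gives (\ref{33M}) with the same constant. So I focus on the forward direction.

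Assume $f$ is $RS$-abstract $p$-summing with constant $C$, and introduce the subset $\mathcal{Q}$ of $C(K)$ consisting of all functions
\[
g(\varphi)=\sum_{j=1}^{m}\bigl[S(f,x_j,b_j)^p-C^p R(\varphi,x_j,b_j)^p\bigr],
\]
as $m$ and the $(x_j,b_j)\in E\times G$ vary. Three facts need to be checked. First, $g\in C(K)$, which follows from hypothesis \textbf{(2)} since only the $R$-terms depend on $\varphi$. Second, $\min_{\varphi\in K}g(\varphi)\leq 0$, which is exactly the content of (\ref{33M}) after rearrangement. Third, and crucially, $\mathrm{conv}(\mathcal{Q})$ still has the property that each of its elements is pointwise dominated by a member of $\mathcal{Q}$; this is where hypothesis \textbf{(3)}, applied with $\eta=t^{1/p}$ for $t\in[0,1]$, is deployed to give the pointwise inequality
\[
t\bigl[S(f,x,b)^p-C^p R(\varphi,x,b)^p\bigr]\leq S(f,x,t^{1/p}b)^p-C^p R(\varphi,x,t^{1/p}b)^p,
\]
so each scaling $t_j g_j$ is dominated by a member of $\mathcal{Q}$, and concatenating the index sets shows the same for every convex combination $\sum t_j g_j$.

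Consequently $\mathrm{conv}(\mathcal{Q})$ is a convex set in $C(K)$ on which $\min\leq 0$, hence is disjoint from the open convex cone $\mathcal{C}_{++}=\{h\in C(K):h>0\text{ on }K\}$. Hahn--Banach separation yields a nonzero continuous linear functional $\Lambda$ on $C(K)$ and a scalar $c$ with $\Lambda(g)\leq c\leq \Lambda(h)$ on the two sides; since $\mathcal{C}_{++}$ is invariant under positive scaling and contains all strictly positive constants, standard manipulations force $c\leq 0$, $\Lambda$ positive, and $\Lambda(\mathbf{1})>0$. Riesz representation identifies $\Lambda$ with integration against a positive Borel measure; normalizing by $\Lambda(\mathbf{1})$ produces a probability measure $\mu$ satisfying $\int g\,d\mu\leq 0$ for every $g\in\mathcal{Q}$. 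Applied to single-term elements this reads $S(f,x,b)^p\leq C^p\int_K R(\varphi,x,b)^p\,d\mu(\varphi)$, and extracting $p$-th roots yields (\ref{gupdt}).

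The step that will require the most care is the third property of $\mathcal{Q}$ above: verifying that $\mathrm{conv}(\mathcal{Q})$ retains $\min\leq 0$ is precisely where hypothesis \textbf{(3)} earns its keep in this argument. By contrast, hypothesis \textbf{(1)} is nowhere used, because the separation needs only convexity of $\mathrm{conv}(\mathcal{Q})$ and disjointness from $\mathcal{C}_{++}$, not the presence of $0$ in $\mathcal{Q}$; this is the structural reason Theorem \ref{part} can dispense with \textbf{(1)}.
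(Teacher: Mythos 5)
Your proof is correct, and the forward direction is a genuine (though closely related) variant of the paper's argument rather than a reproduction of it. The paper works on the dual side: for each finite family $(x_j,b_j)_{j=1}^m$ it forms the function $g(\mu)=\sum_{j=1}^m\bigl[S(f,x_j,b_j)^p-C^p\int_K R(\varphi,x_j,b_j)^p\,d\mu\bigr]$ on the weak-star compact convex set $P(K)$ of probability measures, checks that the family $\mathcal F$ of all such $g$ is concave (this is where \textbf{(3)} enters, via exactly your scaling computation with $\eta=t^{1/p}$), that each $g$ is convex and continuous, and that each $g$ is $\leq 0$ at the Dirac measure $\delta_{\varphi_0}$ sitting at a maximizer of $\sum_j R(\varphi,x_j,b_j)^p$ (which exists by \textbf{(2)} and compactness of $K$); Ky Fan's Lemma then yields a single $\overline{\mu}$ with $g(\overline{\mu})\leq 0$ for all $g\in\mathcal F$, and taking $m=1$ finishes. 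You instead work in $C(K)$ with the functions $\varphi\mapsto\sum_j\bigl[S(f,x_j,b_j)^p-C^pR(\varphi,x_j,b_j)^p\bigr]$, verify the same stability of $\min\leq 0$ under convex combinations via \textbf{(3)}, and then separate $\mathrm{conv}(\mathcal Q)$ from the open cone of strictly positive functions by Hahn--Banach, concluding with the Riesz representation theorem. The two routes are dual formulations of the same minimax step: your domination-of-convex-combinations property is literally the ``concave family'' hypothesis in the paper's application of Ky Fan (integrate your pointwise inequality against $\mu$ to recover it), and your verification of it, including the direction of both inequalities in \textbf{(3)} after raising to the power $p$, is sound. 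What your route buys is self-containedness --- only Hahn--Banach separation and Riesz representation are needed, with no minimax lemma quoted as a black box; what the paper's route buys is brevity and the fact that the measure appears directly as the variable being optimized rather than through a representation theorem. Your closing observation that \textbf{(1)} is never used is precisely the point of Theorem~\ref{part}.
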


\begin{proof}
If (\ref{gupdt}) holds it is easy to show that $f$ is $RS$-abstract
$p$-summing. For the converse, consider the (compact) set $P(K)$ of the
probability measures in $C(K)^{\ast}$ (endowed with the weak-star topology).
For each $(x_{j})_{j=1}^{m}$ in $E,$ $(b_{j})_{j=1}^{m}$ in $G$ and
$m\in\mathbb{N},$ let $g:P(K)\rightarrow\mathbb{R}$ be defined by%
\[
g\left(  \mu\right)  =\sum_{j=1}^{m}\left[  S(f,x_{j},b_{j})^{p}-C^{p}\int
_{K}R\left(  \varphi,x_{j},b_{j}\right)  ^{p}d\mu\right]
\]
and $\mathcal{F}$ be the set of all such $g.$ Using \textbf{(3)}, one can
prove that the family $\mathcal{F}$ is concave and each $g\in\mathcal{F}$ is
convex and continuous. Besides, for each $g\in\mathcal{F}$ there is a measure
$\mu_{g}\in P(K)$ such that $g(\mu_{g})\leq0.$ In fact, from \textbf{(2)}
there is a $\varphi_{0}\in K$ so that%
\[
\sum_{j=1}^{m}R\left(  \varphi_{0},x_{j},b_{j}\right)  ^{p}=\sup_{\varphi\in
K}\sum_{j=1}^{m}R\left(  \varphi,x_{j},b_{j}\right)  ^{p}%
\]
and, considering the Dirac measure $\mu_{g}=\delta_{\varphi_{0}},$ we have
$g(\mu_{g})\leq0.$ So, Ky Fan Lemma asserts that there exists a $\overline
{\mu}\in P(K)$ so that%
\[
g(\overline{\mu})\leq0
\]
for all $g\in\mathcal{F}$ and by choosing an arbitrary $g$ with $m=1$ the
proof is done.
\end{proof}

\section{The main result}

Note that if each $\lambda_{j}$ is a positive integer, by considering each
$x_{j}$ repeated $\lambda_{j}$ times in (\ref{33M}) one can easily see that
(\ref{33M}) is equivalent to%
\begin{equation}
\left(  \sum_{j=1}^{m}\lambda_{j}S(f,x_{j},b_{j})^{p}\right)  ^{\frac{1}{p}%
}\leq C\sup_{\varphi\in K}\left(  \sum_{j=1}^{m}\lambda_{j}R\left(
\varphi,x_{j},b_{j}\right)  ^{p}\right)  ^{\frac{1}{p}}\label{eeee}%
\end{equation}
for all $x_{1},\ldots,x_{m}\in E,$ $b_{1},\ldots,b_{m}\in G,$ positive
integers $\lambda_{j\text{ }}$and $m\in\mathbb{N}$. Then it is possible to
show that (\ref{33M}) holds for positive rationals and finally extend to
positive real numbers $\lambda_{j}$ (using an argument of density)$.$ The
essence of this argument appears in \cite{BPRn, FaJo} and is credited to M.
Mendel and G. Schechtman.

Now using (\ref{eeee}) and invoking Theorem \ref{part} we can prove a Pietsch
Domination-type theorem with no hypothesis on $S$ and just supposing that $R$
satisfies \textbf{(2)}:

\begin{theorem}
\label{tu} Suppose that $S$ is arbitrary and $R$ satisfies \textbf{(2)}. A map
$f\in{\mathcal{H}}$ is $RS$-abstract $p$-summing if and only if there is a
constant $C>0$ and a Borel probability measure $\mu$ on $K$ such that%
\begin{equation}
S(f,x,b)\leq C\left(  \int_{K}R\left(  \varphi,x,b\right)  ^{p}d\mu\right)
^{1/p} \label{yr}%
\end{equation}
for all $x\in E$ and $b\in G$.\newline

\begin{proof}
It is clear that if $f$ satisfies (\ref{yr}) then $f\in{\mathcal{H}}$ is
$RS$-abstract $p$-summing. Conversely, if $f\in{\mathcal{H}}$ is $RS$-abstract
$p$-summing, then%
\begin{equation}
\left(  \sum_{j=1}^{m}\lambda_{j}S(f,x_{j},b_{j})^{p}\right)  ^{\frac{1}{p}%
}\leq C\sup_{\varphi\in K}\left(  \sum_{j=1}^{m}\lambda_{j}R\left(
\varphi,x_{j},b_{j}\right)  ^{p}\right)  ^{\frac{1}{p}}\label{bbb}%
\end{equation}
for all $x_{1},\ldots,x_{m}\in E,$ $b_{1},\ldots,b_{m}\in G,$ $\lambda
_{1},...,\lambda_{m}\in\lbrack0,\infty)$ and $m\in\mathbb{N}$. Let
\[
E_{1}=E\times G\text{ \ \ and \ \ }G_{1}=\mathbb{K}%
\]
and define
\[
\overline{R}\colon K\times E_{1}\times G_{1}\longrightarrow\lbrack
0,\infty)\text{ \ \ and \ \ }\overline{S}\colon{\mathcal{H}}\times E_{1}\times
G_{1}\longrightarrow\lbrack0,\infty)
\]
by%
\[
\overline{R}(\varphi,(x,b),\lambda)=\left\vert \lambda\right\vert
R(\varphi,x,b)\text{ \ and \ }\overline{S}(f,(x,b),\lambda)=\left\vert
\lambda\right\vert S(f,x,b).
\]

From (\ref{bbb}) we conclude that
\[
\left(  \sum_{j=1}^{m}\overline{S}(f,(x_{j},b_{j}),\eta_{j})^{p}\right)
^{\frac{1}{p}}\leq C\sup_{\varphi\in K}\left(  \sum_{j=1}^{m}\overline
{R}\left(  \varphi,(x_{j},b_{j}),\eta_{j}\right)  ^{p}\right)  ^{\frac{1}{p}}%
\]
for all $x_{1},\ldots,x_{m}\in E,$ $b_{1},\ldots,b_{m}\in G,$ $\eta
_{1},...,\eta_{m}\in\mathbb{K}$ and $m\in\mathbb{N}$.

Since $\overline{R}$ and $\overline{S}$ satisfy \textbf{(2)} and \textbf{(3)},
from Theorem \ref{part} we conclude that there is a measure $\mu$ so that
\[
\overline{S}(f,(x,b),\eta)\leq C\left(  \int_{K}\overline{R}\left(
\varphi,(x,b),\eta\right)  ^{p}d\mu\right)  ^{1/p}%
\]
for all $x\in E,$ $b\in G$ and $\eta\in\mathbb{K}.$ Hence it easily follows
that, for all $x\in E$ and $b\in G,$ we have
\[
S(f,x,b)\leq C\left(  \int_{K}R(\varphi,x,b)^{p}d\mu\right)  ^{1/p}.
\]

\end{proof}
\end{theorem}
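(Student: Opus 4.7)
The strategy is to reduce Theorem \ref{tu} to Theorem \ref{part} by enlarging the parameter spaces so that the homogeneity condition \textbf{(3)} becomes automatic. The ``if'' direction is a direct H\"older-type calculation, so I focus on the ``only if'' direction.

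First, I would upgrade the defining inequality (\ref{33M}) to accept arbitrary non-negative real weights in front of each summand. If each $\lambda_j$ is a positive integer, repeating the pair $(x_j,b_j)$ exactly $\lambda_j$ times in (\ref{33M}) yields (\ref{eeee}) immediately. Clearing denominators extends this to positive rationals, and a density argument combined with continuity of the $p$-th power extends it to all $\lambda_j \in [0,\infty)$. This is the Mendel--Schechtman argument recalled at the start of the section.

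Next, I would absorb the weight into a new scalar coordinate. Set $E_1 := E \times G$ and $G_1 := \mathbb{K}$, and define
\[
\overline{R}(\varphi,(x,b),\lambda) := |\lambda|\, R(\varphi,x,b), \qquad \overline{S}(f,(x,b),\lambda) := |\lambda|\, S(f,x,b).
\]
Continuity of $R_{x,b}$ transfers immediately to $\overline{R}_{(x,b),\lambda}$, so \textbf{(2)} is preserved. The multiplicative prefactor $|\lambda|$ forces the exact (in fact, not merely sub-) homogeneity $\overline{R}(\varphi,(x,b),\eta\lambda) = \eta\,\overline{R}(\varphi,(x,b),\lambda)$ and likewise for $\overline{S}$ whenever $0 \leq \eta \leq 1$, so \textbf{(3)} holds by construction. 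The extended inequality obtained in the previous step, applied with $\lambda_j = |\eta_j|$ for $\eta_j \in \mathbb{K}$, says precisely that $f$ is $\overline{R}\,\overline{S}$-abstract $p$-summing on $(E_1,G_1)$ with the same constant $C$.

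Finally, I would invoke Theorem \ref{part} for the pair $(\overline{R},\overline{S})$ to obtain a Borel probability measure $\mu$ on $K$ with
\[
\overline{S}(f,(x,b),\eta) \leq C\left( \int_K \overline{R}(\varphi,(x,b),\eta)^p \, d\mu \right)^{1/p}
\]
for every $(x,b) \in E_1$ and every $\eta \in \mathbb{K}$. Specializing to $\eta = 1$ cancels the factor $|\eta|$ on both sides and yields (\ref{yr}). The main conceptual obstacle is the second step: one must recognize that moving the scalar weight into an extra coordinate converts \textbf{(3)} from a substantive hypothesis into a tautology, so that both \textbf{(1)} and the original \textbf{(3)} can be discarded for free; the first step is needed only to ensure that the enlarged summing inequality makes sense for arbitrary scalars $\eta_j$.
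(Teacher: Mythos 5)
Your proposal is correct and follows essentially the same route as the paper: the Mendel--Schechtman weighting argument to obtain (\ref{eeee}), the absorption of the scalar weight into the extra coordinate $G_1=\mathbb{K}$ via $\overline{R}$ and $\overline{S}$ so that \textbf{(2)} and \textbf{(3)} hold for the new pair, and the application of Theorem \ref{part} followed by specializing to $\eta=1$. No substantive differences from the paper's argument.
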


\bigskip

\end{document}